\newtheorem{corollary}{Corollary}
\newtheorem{theorem}{Theorem}
\begin{document}
\title[Harmonic error functions]{On the inclusion properties for harmonic
error functions}

\begin{abstract}
For the error functions of the form
\begin{equation*}
E_{r}\mathfrak{f}(z)=\frac{\sqrt{\pi z}}{2}er\ \mathfrak{f}(\sqrt{z})=z+\Sigma_{n=2}^{\infty} \frac{(-1)^{n-1}}{(2n-1)(n-1)!}z^{n},
\end{equation*}%
let $\mathcal{E}S_{\mathcal{H}}(k,\lambda ,\gamma )\,$\ represent the class
of harmonic error functions $\mathcal{ERF}=\mathcal{ERH}+\overline{\mathcal{%
ERG}}$ in the open unit disk $\mathbb{U}=\left\{ z\in \mathbb{C}:\ \
\left\vert z\right\vert <1\right\} $. The paper attempts to
present some basic properties for functions
in this class.
\end{abstract}

\subjclass[2010]{Primary 30C45; Secondary 30C80}
\keywords{ Harmonic error function, starlike function, convolution}
\author[\c{S}. Alt\i nkaya, S. Yal\c{c}\i n]{\c{S}ahsene Alt\i nkaya$^{1,\ast }$, Sibel Yal\c{c}\i n$^{2}$}
\address{$^{1}$Department of Mathematics and Statistics, Faculty of Science, Turku University, 20014, Turku, Finland \\
\newline $^{2}$Department of Mathematics, Faculty of Arts and
Science, Bursa Uludag University, 16059, G\"{o}r\"{u}kle, Bursa, Türkiye}
\email{sahsenealtinkaya@gmail.com}
\email{syalcin@uludag.edu.tr}
\maketitle

\section{Introduction and background}

\thispagestyle{empty}Indicate by $\mathcal{A}$ the family of functions
analytic in $\mathbb{U}$, of the form: 
\begin{equation}
f(z)=z+\Sigma_{n=2}^{\infty}a_{n}z^{n},  \label{eq1}
\end{equation}%
which fulfill the normalization $f(0)=f^{\prime }(0)-1=0$ and also indicate by $\mathcal{S}$ the subfamily of $\mathcal{A}$ including univalent functions in $\mathbb{U}$. Further, for the function $g$ with the Taylor series $g(z)=z+b_{2}z^{2}+\cdots ,$ the
convolution $f\ast g$ is expressed by
\begin{equation*}
\left( f\ast g\right) (z)=z+\Sigma_{n=2}^{\infty}
a_{n}b_{n}z^{n}.
\end{equation*}%
The real-valued function $v$ is named harmonic in a domain $B\subset \mathbb{C}$ if it has continuous second order partial derivatives in $B$, which
fulfills the Laplace equation 
\begin{equation*}
\Delta v:=\frac{\partial ^{2}v}{\partial x^{2}}+\frac{\partial ^{2}v}{%
\partial y^{2}}=0.
\end{equation*}

A harmonic mapping $f$ of the simply connected domain $B$ is a
complex-valued function of the form $f=h+\overline{g}$ $\left( h\text{,}g%
\text{ analytic and }h(0)=h^{\prime }(0)-1=0,\ g(0)=0\right) .$ We call $h$
and $g$ analytic and co-analytic part of $f$, respectively (see \cite{CS-S}%
). $J_{f(z)}=\left\vert f_{z}(z)\right\vert
^{2}-\left\vert f_{\overline{z}}(z)\right\vert ^{2}=\left\vert h^{\prime
}(z)\right\vert ^{2}-\left\vert g^{\prime }(z)\right\vert ^{2}$ is defined as the Jacobian of $f$. Also, $f$ is locally univalent iff its Jacobian is
never zero, and is sense-preserving provided that the Jacobian is positive \cite{L}.

Indicate by $\mathcal{H}$ the family of all harmonic functions of the form $%
f=h+\overline{g}$, where 
\begin{equation}
h(z)=z+ \Sigma_{n=2}^{\infty}a_{n}z^{n},~\
g(z)= \Sigma_{n=1}^{\infty}b_{n}z^{n}  \label{C}
\end{equation}%
and indicate by $\mathcal{SH}$ the family of complex-valued harmonic, univalent
mappings that are normalized with $f(0)=f_{z}(0)-1=0$ in $\mathbb{U}$. Since $f=h+\overline{g}$ where $h$ and $g$ are analytic, $f$ has the series
representation 
\begin{equation}
f(z)=z+\Sigma_{n=2}^{\infty}a_{n}z^{n}+\Sigma_{n=1}^{\infty}
\overline{b_{n}z^{n}}\ \ \ \left( \left\vert b_{1}\right\vert <1,\ z\in 
\mathbb{U}\right) ,  \label{3}
\end{equation}%
which are univalent, sense-preserving in ${\mathbb{U}}.$ The subfamily $\mathcal{SH}^{0}$ of $\mathcal{SH}$ consists of all functions in $\mathcal{SH}$ which satisfying $f_{\bar{z}}(0)=b_{1}=0.$ Note that%
\begin{equation*}
\mathcal{S}\subset \mathcal{SH}^{0}\subset \mathcal{SH}.
\end{equation*}%
In 1984, Clunie and Sheil-Small \cite{CS-S} examined the family $\mathcal{SH}$ and its geometric subfamilies.
Since then, there have been a large number of papers on $\mathcal{SH}$ as well
as its subfamilies \cite{alt}, \cite{ca}, \cite{D}, \cite%
{DD}, \cite{J}, \cite{mos}, \cite{s}, \cite{yasar2}). When the co-analytic part of 
$f$ is identically zero, $\mathcal{SH}$ reduces to $\mathcal{S}$.

Al-Shaqsi and Darus \cite{al} established the derivative operator
\begin{equation*}
C_{\lambda }^{k}f(z)=C_{\lambda }^{k}h(z)+\overline{C_{\lambda }^{k}g(z)}~\
\ \ \ \ \ \ \left( k,\lambda \in \mathbb{N}
_{0}=\mathbb{N}\cup \left\{ 0\right\} \right) ,
\end{equation*}%
where $C_{\lambda }^{k}h(z)=z+\Sigma_{n=2}^{\infty}\binom{n+\lambda -1%
}{\lambda }n^{k}a_{n}z^{n}$ and $C_{\lambda
}^{k}g(z)=\Sigma_{n=1}^{\infty}\binom{n+\lambda -1}{\lambda }%
n^{k}b_{n}z^{n}.$

Recently, Ramachandran et al. \cite{ram} studied the normalized analytic
error function 
\begin{equation*}
E_{r}\mathfrak{f}(z)=\frac{\sqrt{\pi z}}{2}er\ \mathfrak{f}(\sqrt{z})=z+\Sigma_{n=2}^{\infty}\tfrac{(-1)^{n-1}}{(2n-1)(n-1)!}z^{n}
\end{equation*}%
and defined the family
\begin{equation*}
\mathcal{E}=\mathcal{A}\ast E_{r}\mathfrak{f}=\left\{ \mathcal{ERF}:\mathcal{%
ERF}(z)=\left( f\ast E_{r}\mathfrak{f}\right) (z)=z+\Sigma_{n=2}^{\infty}\tfrac{(-1)^{n-1}}{(2n-1)(n-1)!}a_{n}z^{n},\ f\in 
\mathcal{A}\right\} .
\end{equation*}%
Let $C_{\lambda }^{k}\mathcal{ERF}=C_{\lambda }^{k}\mathcal{ERH}+C_{\lambda
}^{k}\overline{\mathcal{ERG}}$ with $\mathcal{H}$ and $\mathcal{G}$ be
analytic in $\mathbb{U}$, where%
\begin{equation*}
C_{\lambda }^{k}\mathcal{ERH}(z)=z+\Sigma_{n=2}^{\infty}\tbinom{n+\lambda -1}{\lambda }\tfrac{(-1)^{n-1}n^{k}}{(2n-1)(n-1)!}a_{n}z^{n}
\end{equation*}%
and%
\begin{equation*}
C_{\lambda }^{k}\mathcal{ERG}(z)=\Sigma_{n=1}^{\infty}\tbinom{n+\lambda
-1}{\lambda }\tfrac{(-1)^{n-1}n^{k}}{(2n-1)(n-1)!}b_{n}z^{n}.
\end{equation*}

Let $\mathcal{E}S_{\mathcal{H}}(k,\lambda ,\gamma )\,$represent the family
of harmonic error functions $f$ of the form $(\ref{C})$ such that 
\begin{equation}
\Re \left\{ \frac{z\left( C_{\lambda }^{k}\mathcal{ERF}(z)\right) _{z}-
\overline{z}\left( C_{\lambda }^{k}\mathcal{ERF}(z)\right) _{\overline{z}}}{C_{\lambda }^{k}\mathcal{ERF}(z)}\right\} \geq \gamma ~\ \left( k,\lambda
\in \mathbb{N},\ 0\leq \gamma <1\right) .  \label{5}
\end{equation}%
Let $\mathcal{E}S_{\overline{\mathcal{H}}}(k,\lambda ,\gamma )\,$represent
the subfamily of $\mathcal{E}S_{\mathcal{H}}(k,\lambda ,\gamma )$ consists of
harmonic functions $f=h+\overline{g}$ such that $h$, $g$ are of the form 
\begin{equation}
h(z)=z+\Sigma_{n=2}^{\infty}(-1)^{n}\left\vert a_{n}\right\vert
z^{n},~\ g(z)=\Sigma_{n=1}^{\infty}(-1)^{n-1}\left\vert
b_{n}\right\vert z^{n}.  \label{x}
\end{equation}

\section{Main results}

Firstly, we attempt to find the sufficient condition for harmonic error
functions in $\mathcal{E}S_{\mathcal{H}}(k,\lambda ,\gamma ).$

\begin{theorem}
\label{t1}If a function $f\in \mathcal{H}$ of the form $(\ref{C})$ fulfills
\begin{equation}
\Sigma_{n=1}^{\infty}\tbinom{n+\lambda -1}{\lambda }\tfrac{n^{k}\left[
(n-\gamma )\left\vert a_{n}\right\vert +(n+\gamma )\left\vert
b_{n}\right\vert \right] }{(1-\gamma )(2n-1)(n-1)!}\leq 2,  \label{2`1}
\end{equation}%
then $f$ is sense-preserving, harmonic univalent in $\mathbb{U}$ and $f\in 
\mathcal{E}S_{\mathcal{H}}(k,\lambda ,\gamma )$.
\end{theorem}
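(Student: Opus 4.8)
The plan is to split the statement into an analytic part (membership in $\mathcal{E}S_{\mathcal{H}}(k,\lambda ,\gamma )$, i.e.\ verification of $(\ref{5})$) and a geometric part (sense\nobreakdash-preservation and univalence), and to reduce both to a single rewriting of the hypothesis. Throughout I would abbreviate $T_n=\binom{n+\lambda-1}{\lambda}\tfrac{n^{k}}{(2n-1)(n-1)!}$, so that $C_\lambda^k\mathcal{ERH}(z)=z+\sum_{n=2}^\infty(-1)^{n-1}T_na_nz^n$ and $C_\lambda^k\mathcal{ERG}(z)=\sum_{n=1}^\infty(-1)^{n-1}T_nb_nz^n$, with $T_1=1$. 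The preparatory step is to isolate the $n=1$ term of $(\ref{2`1})$: since $a_1=1$, that term equals $1+\tfrac{1+\gamma}{1-\gamma}|b_1|$, so $(\ref{2`1})$ is equivalent to
\begin{equation*}
\sum_{n=2}^\infty\frac{(n-\gamma)T_n}{1-\gamma}\,|a_n|+\sum_{n=1}^\infty\frac{(n+\gamma)T_n}{1-\gamma}\,|b_n|\le 1. \tag{$\star$}
\end{equation*}

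For the analytic core I would set $\phi=C_\lambda^k\mathcal{ERH}$, $\psi=C_\lambda^k\mathcal{ERG}$, $F=\phi+\overline{\psi}$, so that the numerator in $(\ref{5})$ is $N:=z\phi'(z)-\overline{z\psi'(z)}$. The key device is the elementary equivalence: for $0\le\gamma<1$ one has $\Re\{N/F\}\ge\gamma$ if and only if $|N+(1-\gamma)F|\ge|N-(1+\gamma)F|$, which comes from $|w+(1-\gamma)|^2-|w-(1+\gamma)|^2=4(\Re w-\gamma)$ applied to $w=N/F$. Expanding both quantities in their analytic and co\nobreakdash-analytic coefficients, applying the triangle inequality, and using $|z|^{n}\le|z|$ for $|z|<1$, the difference $|N+(1-\gamma)F|-|N-(1+\gamma)F|$ is bounded below by
\begin{equation*}
2|z|\Bigl[(1-\gamma)-\sum_{n=2}^\infty(n-\gamma)T_n|a_n|-\sum_{n=1}^\infty(n+\gamma)T_n|b_n|\Bigr],
\end{equation*}
after the cancellations $(n+1-\gamma)+(n-1-\gamma)=2(n-\gamma)$ and $(n-1+\gamma)+(n+1+\gamma)=2(n+\gamma)$; the bracket is nonnegative precisely by $(\star)$. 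Hence $\Re\{N/F\}\ge\gamma$ and $f\in\mathcal{E}S_{\mathcal{H}}(k,\lambda ,\gamma )$.

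For the geometric part I would first extract from $(\star)$ the cruder estimate $\sum_{n=2}^\infty n|a_n|+\sum_{n=1}^\infty n|b_n|\le 1$. Granting that, sense\nobreakdash-preservation follows from $|h'(z)|\ge 1-\sum_{n\ge2}n|a_n||z|^{n-1}>\sum_{n\ge1}n|b_n||z|^{n-1}\ge|g'(z)|$ on $\mathbb{U}$ (positive Jacobian), and univalence follows from the standard chord estimate $|f(z_1)-f(z_2)|\ge|h(z_1)-h(z_2)|-|g(z_1)-g(z_2)|>|z_1-z_2|\bigl(1-\sum_{n\ge2}n|a_n|-\sum_{n\ge1}n|b_n|\bigr)\ge 0$ for $z_1\ne z_2$.

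I expect the obstacle to be twofold. The bookkeeping in the expansion of $|N\pm(\cdots)F|$ must be handled carefully: one must retain the $n=1$ co\nobreakdash-analytic term with the correct coefficient $\gamma$, and must note that $n-1-\gamma>0$ for every $n\ge2$ so that no absolute value changes sign. More seriously, the passage from $(\star)$ to $\sum n|a_n|+\sum n|b_n|\le 1$ rests on the comparison $\tfrac{(n-\gamma)T_n}{1-\gamma}\ge n$ (and likewise for the $b_n$ weights); since $\tfrac{n-\gamma}{1-\gamma}\ge 1$ this reduces to controlling $T_n$ against $n$, and because $T_n$ carries the factorial factor $1/[(2n-1)(n-1)!]$ this is the inequality I would scrutinize most. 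It is comfortable for larger $k,\lambda$ but delicate for small values, and it is precisely the step on which the sense\nobreakdash-preserving and univalence assertions genuinely depend.
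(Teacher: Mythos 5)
Your analytic half is correct and coincides with the paper's own argument: the same criterion $\Re\, w\ge \gamma \iff |w+1-\gamma|\ge |w-1-\gamma|$ applied to $w=N/F$, the same coefficient expansion with the cancellations $(n+1-\gamma)+(n-1-\gamma)=2(n-\gamma)$ and $(n-1+\gamma)+(n+1+\gamma)=2(n+\gamma)$, and the same final lower bound, which is nonnegative by your $(\star)$; this part does establish $f\in \mathcal{E}S_{\mathcal{H}}(k,\lambda ,\gamma )$. The geometric half, however, contains a genuine gap, located at precisely the step you singled out, and your risk assessment of that step is backwards. Writing $T_n=\binom{n+\lambda -1}{\lambda }\frac{n^{k}}{(2n-1)(n-1)!}$, the passage from $(\star)$ to $\sum_{n\ge 2}n|a_n|+\sum_{n\ge 1}n|b_n|\le 1$ needs the termwise comparison $\frac{(n-\gamma )T_n}{1-\gamma }\ge n$. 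This is not ``delicate for small $k,\lambda$ and comfortable for large $k,\lambda$'': for every fixed $k,\lambda$ the factors $\binom{n+\lambda -1}{\lambda }n^{k}$ grow only polynomially in $n$, while $T_n$ carries the factor $1/[(2n-1)(n-1)!]$, so $T_n\to 0$ and the comparison fails for all sufficiently large $n$, no matter how large $k$ and $\lambda$ are.

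Moreover the gap cannot be closed, because the sense-preservation/univalence half of the theorem is false as stated. Take $k=\lambda =1$, $\gamma =0$, $g\equiv 0$, $h(z)=z+6z^{6}$. The left side of $(\ref{2`1})$ equals $1+\frac{6^{3}}{11\cdot 5!}\cdot 6=1+\frac{54}{55}<2$, so the hypothesis holds; yet $h^{\prime }(z)=1+36z^{5}$ vanishes at $|z|=36^{-1/5}\approx 0.49<1$, so this $f$ is neither locally univalent nor sense-preserving in $\mathbb{U}$. The structural reason is the one your own computation exposes: hypothesis $(\ref{2`1})$ constrains only the quantities $T_n|a_n|$, $T_n|b_n|$, i.e.\ the coefficients of the transformed function $C_{\lambda }^{k}\mathcal{ERF}$ where the factorial damping sits, and it does not control $\sum n|a_n|$ for $f$ itself; univalence could at best be claimed for $C_{\lambda }^{k}\mathcal{ERF}$, not for $f$. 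For what it is worth, the paper's own proof commits exactly the error you flagged --- it silently replaces $n$ by $\frac{(n\mp \gamma )T_n}{1-\gamma }$ in the numerator and denominator of its distortion quotient --- so your proposal reproduces the published argument faithfully, flaw included; the difference is that you identified the unjustified step, while the paper asserts it without comment.
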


\begin{proof}
Let suppose that $z_{1}\neq z_{2}.$ Thus, we get%
\begin{eqnarray*}
\left\vert \frac{f(z_{1})-f(z_{2})}{h(z_{1})-h(z_{2})}\right\vert &\geq
&1-\left\vert \frac{g(z_{1})-g(z_{2})}{h(z_{1})-h(z_{2})}\right\vert
=1-\left\vert \frac{\Sigma_{n=1}^{\infty}b_{n}\left(
z_{1}^{n}-z_{2}^{n}\right) }{\left( z_{1}-z_{2}\right) +\Sigma_{n=2}^{\infty}a_{n}\left( z_{1}^{n}-z_{2}^{n}\right) }\right\vert
\\
&& \\
&>&1-\frac{\Sigma_{n=1}^{\infty}n\left\vert
b_{n}\right\vert }{1-\Sigma_{n=2}^{\infty}n\left\vert
a_{n}\right\vert }
\end{eqnarray*}
\begin{eqnarray*}
&\geq &1-\frac{\Sigma_{n=1}^{\infty}\binom{n+\lambda -1}{%
\lambda }\frac{(n+\gamma )\left\vert b_{n}\right\vert n^{k}}{(1-\gamma
)(2n-1)(n-1)!}\left\vert b_{n}\right\vert }{1-\Sigma_{n=2}^{\infty}\binom{n+\lambda -1}{\lambda }\frac{(n-\gamma )n^{k}}{(1-\gamma
)(2n-1)(n-1)!}\left\vert a_{n}\right\vert } \\
&& \\
&\geq &0,
\end{eqnarray*}
which proves univalence. Since 
\begin{eqnarray*}
\left\vert h^{\prime }(z)\right\vert &\geq &1-\Sigma_{n=2}^{\infty}n\left\vert a_{n}\right\vert \left\vert z\right\vert ^{n-1}>1-\Sigma_{n=2}^{\infty}\tbinom{n+\lambda -1}{\lambda }\tfrac{(n-\gamma )n^{k}}{(1-\gamma )(2n-1)(n-1)!}\left\vert a_{n}\right\vert \\
&& \\
&\geq &\Sigma_{n=1}^{\infty}\tbinom{n+\lambda -1}{\lambda }%
\tfrac{(n+\gamma )\left\vert b_{n}\right\vert n^{k}}{(1-\gamma )(2n-1)(n-1)!}%
\left\vert b_{n}\right\vert >\Sigma_{n=1}^{\infty}
n\left\vert b_{n}\right\vert \left\vert z\right\vert ^{n-1} \\
&& \\
&\geq &\left\vert g^{\prime }(z)\right\vert ,
\end{eqnarray*}
$f$ is sense preserving in $\mathbb{U}$. To prove that $f\in \mathcal{E}S_{\mathcal{H}}(k,\lambda ,\gamma )$, we must show that if $(\ref{2`1})$ holds, then the required condition $(\ref{5})$ is fulfilled. By the fact that $\Re (w)\geq \gamma $ iff $\left\vert 1-\gamma +w\right\vert \geq \left\vert 1+\gamma -w\right\vert $,
we must find that
\begin{equation*}
\begin{array}{l}
\left\vert (1-\gamma )C_{\lambda }^{k}\mathcal{ERF}(z)+z\left( C_{\lambda
}^{k}\mathcal{ERF}(z)\right) _{z}-\overline{z}\left( C_{\lambda }^{k}%
\mathcal{ERF}(z)\right) _{\overline{z}}\right\vert \\ 
\\ 
-\left\vert (1+\gamma )C_{\lambda }^{k}\mathcal{ERF}(z)-z\left( C_{\lambda
}^{k}\mathcal{ERF}(z)\right) _{z}+\overline{z}\left( C_{\lambda }^{k}%
\mathcal{ERF}(z)\right) _{\overline{z}}\right\vert \geq 0.%
\end{array}%
\end{equation*}
Hence, we have%
\begin{equation*}
\begin{array}{l}
\left\vert (1-\gamma )C_{\lambda }^{k}\mathcal{ERF}(z)+z\left( C_{\lambda
}^{k}\mathcal{ERF}(z)\right) _{z}-\overline{z}\left( C_{\lambda }^{k}%
\mathcal{ERF}(z)\right) _{\overline{z}}\right\vert \\ 
\\ 
-\left\vert (1+\gamma )C_{\lambda }^{k}\mathcal{ERF}(z)-z\left( C_{\lambda
}^{k}\mathcal{ERF}(z)\right) _{z}+\overline{z}\left( C_{\lambda }^{k}%
\mathcal{ERF}(z)\right) _{\overline{z}}\right\vert \\ 
\\ 
=\left\vert (1-\gamma )z+(1-\gamma )\Sigma_{n=2}^{\infty}
\binom{n+\lambda -1}{\lambda }\frac{(-1)^{n-1}n^{k}}{(2n-1)(n-1)!}%
a_{n}z^{n}+(1-\gamma )\Sigma_{n=1}^{\infty}\binom{%
n+\lambda -1}{\lambda }\frac{(-1)^{n-1}n^{k}}{(2n-1)(n-1)!}\overline{%
b_{n}z^{n}}\right. \\ 
\\ 
+\left. z+\Sigma_{n=2}^{\infty}\binom{n+\lambda -1}{%
\lambda }\frac{(-1)^{n-1}n^{k+1}}{(2n-1)(n-1)!}a_{n}z^{n}-\Sigma_{n=1}^{\infty}\binom{n+\lambda -1}{\lambda }\frac{(-1)^{n-1}n^{k+1}}{%
(2n-1)(n-1)!}\overline{b_{n}z^{n}}\right\vert \\ 
\\ 
-\left\vert (1+\gamma )z+(1+\gamma )\Sigma_{n=2}^{\infty}
\binom{n+\lambda -1}{\lambda }\frac{(-1)^{n-1}n^{k}}{(2n-1)(n-1)!}
a_{n}z^{n}+(1+\gamma )\Sigma_{n=1}^{\infty}\binom{n+\lambda -1}{\lambda }\frac{(-1)^{n-1}n^{k}}{(2n-1)(n-1)!}\overline{%
b_{n}z^{n}}\right. \\ 
\\ 
\left. -z-\Sigma_{n=2}^{\infty}\binom{n+\lambda -1}{%
\lambda }\frac{(-1)^{n-1}n^{k+1}}{(2n-1)(n-1)!}a_{n}z^{n}+\Sigma_{n=1}^{\infty}\binom{n+\lambda -1}{\lambda }\frac{(-1)^{n-1}n^{k+1}}{%
(2n-1)(n-1)!}\overline{b_{n}z^{n}}\right\vert%
\end{array}
\end{equation*}
\begin{equation*}
\begin{array}{l}
=\left\vert (2-\gamma )z+\Sigma_{n=2}^{\infty}\binom{
n+\lambda -1}{\lambda }\frac{(-1)^{n-1}(n+1-\gamma )n^{k}}{(2n-1)(n-1)!}%
a_{n}z^{n}-\Sigma_{n=1}^{\infty}\binom{n+\lambda -1}{%
\lambda }\frac{(-1)^{n-1}(n-1+\gamma )n^{k}}{(2n-1)(n-1)!}\overline{%
b_{n}z^{n}}\right\vert \\ 
\\ 
-\left\vert \gamma z-\Sigma_{n=2}^{\infty}\binom{%
n+\lambda -1}{\lambda }\frac{(-1)^{n-1}(n-1-\gamma )n^{k}}{(2n-1)(n-1)!}%
a_{n}z^{n}+\Sigma_{n=1}^{\infty}\binom{n+\lambda -1}{%
\lambda }\frac{(-1)^{n-1}(n+1+\gamma )n^{k}}{(2n-1)(n-1)!}\overline{%
b_{n}z^{n}}\right\vert \\ 
\\ 
\geq 2\left\vert z\right\vert \left\{ 1-\gamma -\Sigma_{n=2}^{\infty}
\binom{n+\lambda -1}{\lambda }\frac{n^{k}(n-\gamma )\left\vert
a_{n}\right\vert }{(2n-1)(n-1)!}-(1+\gamma )b_{1}+\Sigma_{n=2}^{\infty}\binom{n+\lambda -1}{\lambda }\frac{n^{k}(n+\gamma )\left\vert
b_{n}\right\vert }{(2n-1)(n-1)!}\right\} .%
\end{array}%
\end{equation*}%
The above expression is non-negative by $(\ref{2`1})$ and so $f\in \mathcal{E%
}S_{\mathcal{H}}(k,\lambda ,\gamma ).$
\end{proof}

In the following theorem, we attempt to find the necessary and sufficient conditions
for $f$ of the form $(\ref{x})$.

\begin{theorem}
\label{t2} $f\in \mathcal{E}S_{\overline{\mathcal{H}}}(k,\lambda ,\gamma )$ if and only
if the condition $(\ref{2`1})$ holds true.
\end{theorem}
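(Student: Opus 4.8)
The plan is to establish the two implications separately; the forward one is essentially free. For sufficiency, a function $f$ of the form $(\ref{x})$ certainly lies in $\mathcal H$, so if $(\ref{2`1})$ holds then Theorem~\ref{t1} already yields $f\in\mathcal{E}S_{\mathcal H}(k,\lambda,\gamma)$, and because $f$ carries the special alternating form $(\ref{x})$ this places it in the subfamily $\mathcal{E}S_{\overline{\mathcal H}}(k,\lambda,\gamma)$. Hence only the necessity demands genuine work.

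For necessity I would assume $f\in\mathcal{E}S_{\overline{\mathcal H}}(k,\lambda,\gamma)$, so that $h,g$ have the form $(\ref{x})$ and $(\ref{5})$ holds throughout $\mathbb U$. The first step is to absorb the signs: inserting $a_n=(-1)^n|a_n|$ and $b_n=(-1)^{n-1}|b_n|$ cancels every factor $(-1)^{n-1}$, so that the coefficients of $C_\lambda^k\mathcal{ERH}$ become $-\binom{n+\lambda-1}{\lambda}\tfrac{n^k}{(2n-1)(n-1)!}|a_n|$ and those of $C_\lambda^k\mathcal{ERG}$ become $+\binom{n+\lambda-1}{\lambda}\tfrac{n^k}{(2n-1)(n-1)!}|b_n|$, leaving only the moduli.

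The second step is to test $(\ref{5})$ along the positive real axis. Writing $N$ for the numerator $z(C_\lambda^k\mathcal{ERF})_z-\overline{z}(C_\lambda^k\mathcal{ERF})_{\overline z}$ and $D=C_\lambda^k\mathcal{ERF}$ for the denominator, and setting $z=r\in(0,1)$, both $N(r)$ and $D(r)$ become real, so $(\ref{5})$ reduces to the honest inequality $N(r)\ge\gamma D(r)$. Expanding,
\[
N(r)-\gamma D(r)=(1-\gamma)r-\sum_{n=2}^{\infty}\binom{n+\lambda-1}{\lambda}\tfrac{(n-\gamma)n^k}{(2n-1)(n-1)!}|a_n|r^n-\sum_{n=1}^{\infty}\binom{n+\lambda-1}{\lambda}\tfrac{(n+\gamma)n^k}{(2n-1)(n-1)!}|b_n|r^n\ge0.
\]
Dividing by $r$ and letting $r\to1^{-}$, the two series—each term non-negative since $n-\gamma>0$ and $n+\gamma>0$—increase monotonically to their full values, so the inequality survives the limit and gives $\sum_{n\ge2}(n-\gamma)(\cdots)|a_n|+\sum_{n\ge1}(n+\gamma)(\cdots)|b_n|\le1-\gamma$; dividing by $1-\gamma$ and adjoining the $n=1$ term of the $|a_n|$-sum, which equals $1$ because $|a_1|=1$, reproduces $(\ref{2`1})$ exactly (this accounts for the bound being $2$ rather than $1$).

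The step I expect to be delicate is the clearing of the denominator, i.e. passing from $\Re\{N/D\}\ge\gamma$ to $N(r)-\gamma D(r)\ge0$, since this needs $D(r)=C_\lambda^k\mathcal{ERF}(r)>0$ on $(0,1)$. This is transparent for small $r$, where $D(r)\approx r$, but must be controlled as $r\to1^{-}$; I would secure it from the continuity of $D$ together with the monotonicity in $r$ of $(N(r)-\gamma D(r))/r$, which lets the sign of the limit be read off from values of $r$ arbitrarily near $1$. Everything else is the same bookkeeping already performed in the proof of Theorem~\ref{t1}, now run in reverse.
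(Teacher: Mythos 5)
Your proposal is correct and is essentially the paper's own argument: sufficiency is delegated to Theorem~\ref{t1}, and necessity is obtained by absorbing the signs of $(\ref{x})$, restricting $(\ref{5})$ to $z=r\in(0,1)$, and passing to the limit $r\to1^{-}$ (the paper phrases this last step as a contradiction, you as a direct monotone limit, which is an immaterial difference). The one repair needed is precisely at the step you flag as delicate: positivity of the denominator $D(r)=C_{\lambda}^{k}\mathcal{ERF}(r)$ on $(0,1)$ does not follow from continuity together with the monotonicity of $(N(r)-\gamma D(r))/r$ (monotonicity is only relevant to the limit step), but from the observation that $D$ cannot vanish on $(0,1)$ --- otherwise the quotient in $(\ref{5})$ would be undefined --- so that continuity and $D(r)>0$ for small $r$ force $D>0$ on all of $(0,1)$, which is exactly the justification the paper itself gives.
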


\begin{proof}
In view of Theorem \ref{t1}, we must show that each function $f\in \mathcal{E}%
S_{\overline{\mathcal{H}}}(k,\lambda ,\gamma )$ fulfills the inequality (\ref{2`1}). We deduce that the condition $(\ref{2`1})$ is
equivalent to 
\begin{equation}
\Re \left[ \frac{(1-\gamma )z-\Sigma_{n=2}^{\infty}\binom{n+\lambda -1%
}{\lambda }\frac{\left( n-\gamma \right) n^{k}}{(2n-1)(n-1)!}\left\vert
a_{n}\right\vert z^{n}-\Sigma_{n=1}^{\infty}\binom{n+\lambda -1}{%
\lambda }\frac{\left( n+\gamma \right) n^{k}}{(2n-1)(n-1)!}\left\vert
b_{n}\right\vert \overline{z}^{n}}{z-\Sigma_{n=2}^{\infty}\frac{n^{k}}{%
(2n-1)(n-1)!}\left\vert a_{n}\right\vert z^{n}+\Sigma_{n=1}^{\infty}
\frac{n^{k}}{(2n-1)(n-1)!}\left\vert b_{n}\right\vert \overline{z}^{n}}%
\right] \geq 0\ \ \,\,(z\in \mathbb{U}).  \label{A}
\end{equation}%
Thus, letting $z=r~(0\leq r<1)$ by (\ref{A}), we must arrive
\begin{equation}
\frac{1-\gamma -\Sigma_{n=2}^{\infty}\binom{n+\lambda -1}{\lambda }%
\frac{\left( n-\gamma \right) n^{k}}{(2n-1)(n-1)!}\left\vert
a_{n}\right\vert r^{n-1}-\Sigma_{n=2}^{\infty}\binom{n+\lambda -1}{%
\lambda }\frac{\left( n+\gamma \right) n^{k}}{(2n-1)(n-1)!}\left\vert
b_{n}\right\vert r^{n-1}}{1-\Sigma_{n=2}^{\infty}\frac{n^{k}}{%
(2n-1)(n-1)!}\left\vert a_{n}\right\vert r^{n-1}+\Sigma_{n=1}^{\infty}
\frac{n^{k}}{(2n-1)(n-1)!}\left\vert b_{n}\right\vert r^{n-1}}\geq 0.
\label{2`6}
\end{equation}%
Let the condition (\ref{2`1}) does not hold. The denominator of the LHS cannot vanish for $%
r\in \left( 0,1\right) .$ Moreover, it is positive for $r=0,$ and in
consequence for $r\in \left[ 0,1\right) .$ This contradicts the fact $f\in \mathcal{E}S_{\overline{\mathcal{H}}}(k,\lambda ,\gamma )$. Thus, the proof is complete.
\end{proof}

Next, we attempt to give the distortion bounds for functions in $\mathcal{E}S_{\overline{\mathcal{H}}}(k,\lambda ,\gamma )$.

\begin{theorem}
If $f\in \mathcal{E}S_{\overline{\mathcal{H}}}(k,\lambda ,\gamma )$, then%
\begin{equation*}
\begin{array}{c}
\left\vert f(z)\right\vert \leq \left( 1+\left\vert b_{1}\right\vert \right)
r+\left( \frac{3(1-\gamma )}{2^{k}(2-\gamma )(1+\lambda )}-\frac{3(1+\gamma )%
}{2^{k}(2-\gamma )(1+\lambda )}\left\vert b_{1}\right\vert \right) r^{2} \\ 
\\ 
\left( \left\vert z\right\vert =r~,0\leq r<1\right) 
\end{array}%
\end{equation*}%
and%
\begin{equation*}
\begin{array}{c}
\left\vert f(z)\right\vert \geq \left( 1-\left\vert b_{1}\right\vert \right)
r-\left( \frac{3(1-\gamma )}{2^{k}(2-\gamma )(1+\lambda )}-\frac{3(1+\gamma )%
}{2^{k}(2-\gamma )(1+\lambda )}\left\vert b_{1}\right\vert \right) r^{2} \\ 
\\ 
\left( \left\vert z\right\vert =r~,0\leq r<1\right) .%
\end{array}%
\end{equation*}
\end{theorem}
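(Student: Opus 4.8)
The plan is to derive both inequalities from the triangle inequality combined with the coefficient estimate supplied by Theorem \ref{t2}. Write $f=h+\overline{g}$ with $h,g$ as in $(\ref{x})$ and set $|z|=r$. Splitting off the $n=1$ term $|b_{1}|r$ of the co-analytic part, I would first record the two-sided bound
\[
(1-|b_{1}|)r-\sum_{n=2}^{\infty}(|a_{n}|+|b_{n}|)r^{n}\;\leq\;|f(z)|\;\leq\;(1+|b_{1}|)r+\sum_{n=2}^{\infty}(|a_{n}|+|b_{n}|)r^{n}.
\]
Since $0\leq r<1$ gives $r^{n}\leq r^{2}$ for every $n\geq 2$, the tails are dominated by $r^{2}S$ with $S:=\sum_{n=2}^{\infty}(|a_{n}|+|b_{n}|)$. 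Thus the whole theorem reduces to bounding the single quantity $S$ by the coefficient of $r^{2}$ in the statement.

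Next I would invoke Theorem \ref{t2}: membership in $\mathcal{E}S_{\overline{\mathcal{H}}}(k,\lambda,\gamma)$ is equivalent to $(\ref{2`1})$. Isolating the $n=1$ summand, which (with $a_{1}=1$) equals $1+\frac{(1+\gamma)|b_{1}|}{1-\gamma}$, the inequality $(\ref{2`1})$ rearranges to
\[
\sum_{n=2}^{\infty}\binom{n+\lambda-1}{\lambda}\frac{(n-\gamma)n^{k}}{(1-\gamma)(2n-1)(n-1)!}|a_{n}|+\sum_{n=2}^{\infty}\binom{n+\lambda-1}{\lambda}\frac{(n+\gamma)n^{k}}{(1-\gamma)(2n-1)(n-1)!}|b_{n}|\;\leq\;1-\frac{(1+\gamma)|b_{1}|}{1-\gamma}.
\]
Denote the two weight sequences appearing here by $\Phi_{n}$ (acting on $|a_{n}|$) and $\Psi_{n}$ (acting on $|b_{n}|$).

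The decisive step is to pass from this weighted sum to $S$ itself. I would argue that each weight is at least its value at $n=2$, namely $\Phi_{2}=(\lambda+1)\frac{2^{k}(2-\gamma)}{3(1-\gamma)}$, noting that $\Psi_{n}\geq\Phi_{n}$ automatically because $n+\gamma\geq n-\gamma$. Granting $\Phi_{n},\Psi_{n}\geq\Phi_{2}$ for all $n\geq 2$, one factors $\Phi_{2}$ out of the left-hand side to obtain $\Phi_{2}S\leq 1-\frac{(1+\gamma)|b_{1}|}{1-\gamma}$, hence
\[
S\;\leq\;\frac{3(1-\gamma)}{2^{k}(2-\gamma)(1+\lambda)}-\frac{3(1+\gamma)}{2^{k}(2-\gamma)(1+\lambda)}|b_{1}|,
\]
which is precisely the coefficient of $r^{2}$ in the statement. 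Substituting this into the two-sided estimate of the first paragraph yields both displayed inequalities at once.

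I expect the genuine obstacle to be exactly the minimality claim $\Phi_{n}\geq\Phi_{2}$ for $n\geq 2$. Unlike the textbook harmonic classes, here the weight carries a factor $1/[(2n-1)(n-1)!]$ whose factorial decay competes against the polynomial growth furnished by $\binom{n+\lambda-1}{\lambda}$ and $n^{k}$, so the inequality is by no means automatic and must be verified carefully, for instance by analysing the ratio $\Phi_{n+1}/\Phi_{n}$ as a function of $n,k,\lambda,\gamma$ (or by restricting the parameter range so that the $n=2$ weight is genuinely the smallest). Once this lower bound on the weights is secured, everything else is routine triangle-inequality bookkeeping.
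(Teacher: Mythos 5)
Your proposal follows exactly the same route as the paper's own proof: triangle inequality, the estimate $r^{n}\leq r^{2}$ for $n\geq 2$, extraction of the $n=2$ weight $\Phi_{2}=\tfrac{2^{k}(2-\gamma)(1+\lambda)}{3(1-\gamma)}$ from the tail sum, and then the coefficient condition $(\ref{2`1})$. But the step you deferred --- ``granting $\Phi_{n},\Psi_{n}\geq\Phi_{2}$ for all $n\geq2$'' --- is a genuine and, as it turns out, fatal gap: the minimality claim is false, and your suspicion about the factorial was exactly right. Take $k=\lambda=1$, $\gamma=0$, so that $\Phi_{n}=\binom{n}{1}\tfrac{n\cdot n}{(2n-1)(n-1)!}=\tfrac{n^{3}}{(2n-1)(n-1)!}$. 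Then $\Phi_{2}=\tfrac{8}{3}$, while $\Phi_{4}=\tfrac{64}{42}=\tfrac{32}{21}<\tfrac{8}{3}$; more generally $\Phi_{n}\to 0$ as $n\to\infty$ for every fixed $(k,\lambda,\gamma)$, since $(n-1)!$ eventually dominates $\binom{n+\lambda-1}{\lambda}n^{k}(n-\gamma)$. So restricting the parameter range cannot rescue the claim either: it fails for all sufficiently large $n$ no matter what $k$ and $\lambda$ are.

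Worse, the gap cannot be patched, because the statement itself is false. Let $f(z)=z+\tfrac{21}{32}z^{4}$. This $f$ has the form $(\ref{x})$ and satisfies $(\ref{2`1})$ with equality for $k=\lambda=1$, $\gamma=0$ (the $n=1$ term contributes $1$ and the $n=4$ term contributes $\Phi_{4}\cdot\tfrac{21}{32}=1$), so $f\in\mathcal{E}S_{\overline{\mathcal{H}}}(1,1,0)$ by Theorems \ref{t1} and \ref{t2}. Yet $|f(r)|=r+\tfrac{21}{32}r^{4}$ exceeds the asserted upper bound $r+\tfrac{3}{8}r^{2}$ for every $r>\tfrac{2}{\sqrt{7}}\approx 0.756$. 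The paper's proof commits precisely the error you isolated: in its displayed chain it replaces the constant weight $\tfrac{2^{k}(2-\gamma)(1+\lambda)}{3(1-\gamma)}$ by the $n$-dependent weights, which is legitimate only if that constant is the smallest of them. Since in fact $\inf_{n\geq2}\Phi_{n}=0$, no bound of the form $(1+|b_{1}|)r+Cr^{2}$ with finite $C$ can hold on this class at all; a correct distortion theorem in this setting would have to be stated for the transformed function $C_{\lambda}^{k}\mathcal{ERF}$ (whose coefficient weights $(n-\gamma)/(1-\gamma)$ and $(n+\gamma)/(1-\gamma)$ genuinely are increasing in $n$), not for $f$ itself.
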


\begin{proof}
Since $f\in \mathcal{E}S_{\overline{\mathcal{H}}}(k,\lambda ,\gamma )$, then
by taking the absolute value of $f$, we obtain%
\begin{eqnarray*}
\left\vert f(z)\right\vert &\geq &\left( 1-\left\vert b_{1}\right\vert
\right) r-\Sigma_{n=2}^{\infty}\left( \left\vert a_{n}\right\vert
+\left\vert b_{n}\right\vert \right) r^{n} \\
&& \\
&\geq &\left( 1-\left\vert b_{1}\right\vert \right)
r-\Sigma_{n=2}^{\infty}\left( \left\vert a_{n}\right\vert +\left\vert
b_{n}\right\vert \right) r^{2} \\
&& \\
&\geq &\left( 1-\left\vert b_{1}\right\vert \right) r-\tfrac{3(1-\gamma )}{%
2^{k}(2-\gamma )(1+\lambda )}\Sigma_{n=2}^{\infty}\left( \tfrac{%
2^{k}(2-\gamma )(1+\lambda )}{3(1-\gamma )}\left\vert a_{n}\right\vert +%
\tfrac{2^{k}(2-\gamma )(1+\lambda )}{3(1-\gamma )}\left\vert
b_{n}\right\vert \right) r^{2} \\
&& \\
&\geq &\left( 1-\left\vert b_{1}\right\vert \right) r-\tfrac{3(1-\gamma )}{%
2^{k}(2-\gamma )(1+\lambda )}\Sigma_{n=2}^{\infty}\tbinom{n+\lambda -1%
}{\lambda }\left( \tfrac{n^{k}(n-\gamma )}{(1-\gamma )(2n-1)(n-1)!}%
\left\vert a_{n}\right\vert +\tfrac{n^{k}(n+\gamma )}{(1-\gamma )(2n-1)(n-1)!%
}\left\vert b_{n}\right\vert \right) r^{2} \\
&& \\
&\geq &\left( 1-\left\vert b_{1}\right\vert \right) r-\frac{3(1-\gamma )}{%
2^{k}(2-\gamma )(1+\lambda )}\left[ 1-\frac{1+\gamma }{1-\gamma }\left\vert
b_{1}\right\vert \right] r^{2} \\
&& \\
&\geq &\left( 1-\left\vert b_{1}\right\vert \right) r-\left( \frac{%
3(1-\gamma )}{2^{k}(2-\gamma )(1+\lambda )}-\frac{3(1+\gamma )}{%
2^{k}(2-\gamma )(1+\lambda )}\left\vert b_{1}\right\vert \right) r^{2}.
\end{eqnarray*}

The proof for the right hand inequality is similar, hence we omit it.
\end{proof}

\begin{corollary}
Let $f$ be of the form $(\ref{x})$ so that $f\in \mathcal{E}S_{\overline{%
\mathcal{H}}}(k,\lambda ,\gamma )$. Then%
\begin{equation*}
\left\{ w:\left\vert w\right\vert <\tfrac{2^{k+1}(1+\lambda )-3-\left[
2^{k}(1+\lambda )-3\right] \gamma }{2^{k}(2-\gamma )(1+\lambda )}-\tfrac{%
2^{k+1}(1+\lambda )-3-\left[ 2^{k}(1+\lambda )+3\right] \gamma }{%
2^{k}(2-\gamma )(1+\lambda )}\left\vert b_{1}\right\vert \right\} \subset f(%
\mathbb{U}).
\end{equation*}
\end{corollary}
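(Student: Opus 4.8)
The plan is to read off the covering radius directly from the lower distortion bound established in the preceding theorem. Writing $D=2^{k}(2-\gamma)(1+\lambda)$ for brevity, that theorem gives, for $|z|=r$,
\[
|f(z)|\ge (1-|b_{1}|)r-\Bigl(\tfrac{3(1-\gamma)}{D}-\tfrac{3(1+\gamma)}{D}|b_{1}|\Bigr)r^{2}.
\]
First I would let $r\to 1^{-}$ in this inequality to obtain the limiting lower bound
\[
\liminf_{r\to 1^{-}}\ \min_{|z|=r}|f(z)|\ \ge\ \Bigl(1-\tfrac{3(1-\gamma)}{D}\Bigr)-\Bigl(1-\tfrac{3(1+\gamma)}{D}\Bigr)|b_{1}|.
\]

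Next I would verify, by a direct algebraic rearrangement, that the right-hand side coincides with the radius stated in the corollary. Combining the constant part as $1-3(1-\gamma)/D=(D-3(1-\gamma))/D$ and expanding the numerator yields $2^{k+1}(1+\lambda)-3-[2^{k}(1+\lambda)-3]\gamma$, which is exactly the first fraction in the claim; likewise the coefficient of $|b_{1}|$ rearranges, via $D-3(1+\gamma)=2^{k+1}(1+\lambda)-3-[2^{k}(1+\lambda)+3]\gamma$, into the second fraction. This step is purely computational and is the place where a stray sign or factor could slip in, so it is the one I would check most carefully, but it carries no conceptual difficulty.

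Finally, to convert the distortion estimate into the covering statement, I would invoke the standard argument for sense-preserving harmonic univalent mappings. Since $f(0)=0$ and $f$ is continuous and univalent, $f(\mathbb{U})$ is an open connected set containing the origin. If some point $w_{0}\notin f(\mathbb{U})$, the segment joining $0$ to $w_{0}$ must meet $\partial f(\mathbb{U})$, and every boundary value of $f$ is a limit of $f(z)$ as $|z|\to 1$; hence $|w_{0}|$ is at least the limiting lower bound computed above. Consequently every $w$ whose modulus is strictly smaller than that bound must lie in $f(\mathbb{U})$, which is precisely the asserted inclusion.

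I would expect no genuine obstacle here: the entire content is already packaged in the earlier distortion theorem, and the only care required is in the algebraic identification of the two forms of the radius and in stating the boundary-crossing argument cleanly.
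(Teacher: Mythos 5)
Your proposal is correct and is exactly the intended derivation: the paper states this corollary without proof as an immediate consequence of the preceding distortion theorem, and your argument (let $r\to 1^{-}$ in the lower bound, check that $1-\tfrac{3(1-\gamma)}{D}$ and $1-\tfrac{3(1+\gamma)}{D}$ with $D=2^{k}(2-\gamma)(1+\lambda)$ expand to the two stated fractions, then apply the standard boundary-crossing argument for univalent sense-preserving harmonic maps) is the standard way to fill it in. Your algebraic identification of the radius is verified correct, so nothing is missing.
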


\begin{theorem}
Let $f$ be given by (\ref{x}). Then $f\in \mathcal{E}S_{\overline{\mathcal{H}%
}}(k,\lambda ,\gamma )$ if and only if
\begin{equation*}
f(z)=\Sigma_{n=1}^{\infty}\left(
X_{n}h_{n}(z)+Y_{n}g_{n}(z)\right) ,
\end{equation*}
where
\begin{equation*}
\begin{array}{c}
h_{1}(z)=z,\ \ \ \ \ h_{n}(z)=z-\frac{(1-\gamma )(2n-1)(n-1)!}{%
n^{k}(n-\gamma )\binom{n+\lambda -1}{\lambda }}z^{n}\ \ (n\geq 2), \\ 
\\ 
g_{n}(z)=z+\frac{(1-\gamma )(2n-1)(n-1)!}{n^{k}(n+\gamma )\binom{n+\lambda -1%
}{\lambda }}\overline{z}^{n}\ \ \ (n\geq 1), \\ 
\\ 
\Sigma_{n=1}^{\infty}\left( X_{n}+Y_{n}\right)
=1,X_{n}\geq 0,Y_{n}\geq 0.%
\end{array}%
\end{equation*}
In particular, the extreme points of $\mathcal{E}S_{\overline{%
\mathcal{H}}}(k,\lambda ,\gamma )$ \textit{are }$\left\{ h_{n}\right\} $%
\textit{\ and }$\left\{ g_{n}\right\} $\textit{.}
\end{theorem}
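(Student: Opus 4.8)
The plan is to prove both implications through the coefficient characterization of Theorem \ref{t2}, which asserts that $f\in\mathcal{E}S_{\overline{\mathcal{H}}}(k,\lambda,\gamma)$ precisely when the inequality $(\ref{2`1})$ holds, and only afterwards to identify the extreme points. For the sufficiency direction, I would start from a function $f=\Sigma_{n=1}^{\infty}\left(X_{n}h_{n}(z)+Y_{n}g_{n}(z)\right)$ with $X_{n},Y_{n}\geq 0$ and $\Sigma(X_{n}+Y_{n})=1$, and collect terms. Writing $c_{n}=\frac{(1-\gamma)(2n-1)(n-1)!}{n^{k}(n-\gamma)\binom{n+\lambda-1}{\lambda}}$ and $d_{n}=\frac{(1-\gamma)(2n-1)(n-1)!}{n^{k}(n+\gamma)\binom{n+\lambda-1}{\lambda}}$, so that $h_{1}(z)=z$, $h_{n}(z)=z-c_{n}z^{n}$, and $g_{n}(z)=z+d_{n}\overline{z}^{n}$, the coefficient of $z$ in $f$ is $\Sigma(X_{n}+Y_{n})=1$, the analytic coefficients are $|a_{n}|=X_{n}c_{n}$ for $n\geq 2$, and the co-analytic ones are $|b_{n}|=Y_{n}d_{n}$ for $n\geq 1$. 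The decisive observation is that the weight multiplying $|a_{n}|$ and $|b_{n}|$ in $(\ref{2`1})$ is exactly $1/c_{n}$ and $1/d_{n}$, so every term of the series telescopes to $X_{n}$ or $Y_{n}$. Hence the left-hand side of $(\ref{2`1})$ equals $1+\Sigma_{n\geq 2}X_{n}+\Sigma_{n\geq 1}Y_{n}=1+(1-X_{1})=2-X_{1}\leq 2$, where the isolated $1$ comes from the $n=1$ analytic term with $a_{1}=1$; Theorem \ref{t2} then yields $f\in\mathcal{E}S_{\overline{\mathcal{H}}}(k,\lambda,\gamma)$.

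For the necessity direction, I would run the same computation in reverse, starting from an arbitrary $f\in\mathcal{E}S_{\overline{\mathcal{H}}}(k,\lambda,\gamma)$ of the form $(\ref{x})$. Reading the weights of $(\ref{2`1})$ backwards, set $X_{n}=|a_{n}|/c_{n}$ for $n\geq 2$, $Y_{n}=|b_{n}|/d_{n}$ for $n\geq 1$, and define $X_{1}=1-\Sigma_{n\geq 2}X_{n}-\Sigma_{n\geq 1}Y_{n}$ so that $\Sigma(X_{n}+Y_{n})=1$ holds by construction. Nonnegativity of $X_{n}$ and $Y_{n}$ for $n\geq 2$ (resp. $n\geq 1$) is immediate, while $X_{1}\geq 0$ is exactly the content of $(\ref{2`1})$ once the fixed $n=1$ analytic contribution $1$ is subtracted. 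Substituting these weights back reproduces $\Sigma\left(X_{n}h_{n}+Y_{n}g_{n}\right)=z-\Sigma_{n\geq 2}|a_{n}|z^{n}+\Sigma_{n\geq 1}|b_{n}|\overline{z}^{n}=f(z)$, the desired representation, so the two coefficient maps are mutual inverses.

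Finally, to pin down the extreme points I would use the standard splitting argument. If some generator $g_{m}$ were a proper convex combination $t\phi+(1-t)\psi$ with distinct $\phi,\psi\in\mathcal{E}S_{\overline{\mathcal{H}}}(k,\lambda,\gamma)$ and $0<t<1$, then since $(\ref{2`1})$ forces every admissible member to satisfy $|b_{m}|\leq d_{m}$, matching the coefficient of $\overline{z}^{m}$ in $t\phi+(1-t)\psi=g_{m}$ gives $t|b_{m}^{\phi}|+(1-t)|b_{m}^{\psi}|=d_{m}$ with both terms bounded by $d_{m}$; this forces $|b_{m}^{\phi}|=|b_{m}^{\psi}|=d_{m}$, and equality in $(\ref{2`1})$ then kills every other coefficient, so $\phi=\psi=g_{m}$, a contradiction. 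The identical argument applied to the coefficient of $z^{m}$ handles each $h_{m}$. Conversely, any $f$ whose representation carries two strictly positive weights among $\{X_{n}\}\cup\{Y_{n}\}$ is manifestly a nontrivial convex combination of the corresponding generators, hence not extreme, so $\{h_{n}\}$ and $\{g_{n}\}$ are precisely the extreme points. I expect the only delicate point to be the bookkeeping of the $n=1$ terms—keeping $a_{1}=1$ fixed while $g_{1}$ also feeds the coefficient of $z$—together with confirming that the uniqueness of the weights $(X_{n},Y_{n})$ makes the extremality step airtight; everything else is routine verification.
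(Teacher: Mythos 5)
Your proof of the equivalence follows essentially the same route as the paper's: both directions run through the coefficient criterion of Theorem \ref{t2}, with sufficiency obtained by substituting the representation and letting each term of (\ref{2`1}) telescope to $X_n$ or $Y_n$ (the paper records the sum as $\Sigma_{n\geq 2}X_n+\Sigma_{n\geq 1}Y_n=1-X_1\leq 1$ after stripping off the fixed $n=1$ contribution, you keep it as $2-X_1\leq 2$ --- the same computation), and necessity obtained by inverting the coefficient map and defining $X_1$ as the deficiency. Two remarks on where your version diverges usefully. First, your inversion formulas $X_n=|a_n|/c_n$, $Y_n=|b_n|/d_n$ are the correct ones: the paper's printed coefficient bounds $|a_n|\leq\binom{n+\lambda-1}{\lambda}\frac{n^k(n-\gamma)}{(1-\gamma)(2n-1)(n-1)!}$ and its definitions of $X_n,Y_n$ have the fractions upside down (reciprocal typos); the intended argument is the one you wrote. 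Second, and more substantively, the paper never proves the final assertion about extreme points --- it appears in the statement and is simply not addressed in the proof. Your splitting argument (the bound $|b_m|\leq d_m$ forced by (\ref{2`1}), a convex combination attaining the bound forcing both pieces to attain it, and equality in (\ref{2`1}) annihilating every other coefficient), together with the observation that any member with two positive weights is a proper convex combination of distinct members, supplies that missing piece and is sound, modulo the sign bookkeeping of form (\ref{x}) --- the alternating factors $(-1)^n$, $(-1)^{n-1}$ --- which both you and the paper's own $h_n$, $g_n$ treat loosely. So: same approach to the representation theorem, but your write-up repairs the converse-direction typos and actually proves the extremality claim the paper leaves unproved.
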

\begin{proof}
For functions $f$ of the form (\ref{x}), we may write 
\begin{eqnarray*}
f(z) &=&\Sigma_{n=1}^{\infty}\left(
X_{n}h_{n}(z)+Y_{n}g_{n}(z)\right) \\
&& \\
&=&\Sigma_{n=1}^{\infty}\left( X_{n}+Y_{n}\right) z-%
\Sigma_{n=2}^{\infty}\tfrac{(1-\gamma )(2n-1)(n-1)!}{%
n^{k}(n-\gamma )\binom{n+\lambda -1}{\lambda }}X_{n}z^{n} \\
&& \\
&&+\Sigma_{n=1}^{\infty}\tfrac{(1-\gamma )(2n-1)(n-1)!}{%
n^{k}(n+\gamma )\binom{n+\lambda -1}{\lambda }}Y_{n}\overline{z}^{n}.
\end{eqnarray*}%
Then%
\begin{eqnarray*}
&&\Sigma_{n=2}^{\infty}\tbinom{n+\lambda -1}{\lambda }%
\tfrac{n^{k}(n-\gamma )}{(1-\gamma )(2n-1)(n-1)!}\left( \tfrac{(1-\gamma
)(2n-1)(n-1)!}{n^{k}(n-\gamma )\binom{n+\lambda -1}{\lambda }}X_{n}\right) \\
&& \\
&&+\Sigma_{n=1}^{\infty}\tbinom{n+\lambda -1}{\lambda }%
\tfrac{n^{k}(n+\gamma )}{(1-\gamma )(2n-1)(n-1)!}\left( \tfrac{(1-\gamma
)(2n-1)(n-1)!}{n^{k}(n+\gamma )\binom{n+\lambda -1}{\lambda }}Y_{n}\right) \\
&& \\
&=&\Sigma_{n=2}^{\infty}X_{n}+\Sigma_{n=1}^{\infty}Y_{n}=1-X_{1}\leq 1,\text{ and so }f\in \mathcal{E}S_{\overline{%
\mathcal{H}}}(k,\lambda ,\gamma ).
\end{eqnarray*}
Conversely, if $f\in \mathcal{E}S_{\overline{\mathcal{H}}}(k,\lambda ,\gamma
)$, then
\begin{equation*}
\left\vert a_{n}\right\vert \leq \binom{n+\lambda -1}{\lambda }\dfrac{%
n^{k}(n-\gamma )}{(1-\gamma )(2n-1)(n-1)!}
\end{equation*}%
and 
\begin{equation*}
\left\vert b_{n}\right\vert \leq \binom{n+\lambda -1}{\lambda }\dfrac{%
n^{k}(n+\gamma )}{(1-\gamma )(2n-1)(n-1)!}.
\end{equation*}%
In particular, setting 
\begin{equation*}
X_{n}=\frac{(1-\gamma )(2n-1)(n-1)!}{n^{k}(n-\gamma )\binom{n+\lambda -1}{%
\lambda }}\left\vert a_{n}\right\vert \text{ }(n\geq 2),
\end{equation*}%
\begin{equation*}
Y_{n}=\frac{(1-\gamma )(2n-1)(n-1)!}{n^{k}(n+\gamma )\binom{n+\lambda -1}{%
\lambda }}\left\vert b_{n}\right\vert \ \ (n\geq 1),
\end{equation*}%
and 
\begin{equation*}
X_{1}=1-\left( \Sigma_{n=2}^{\infty}X_{n}+\Sigma_{n=1}^{\infty}Y_{n}\right) ,
\end{equation*}%
where $X_{1}\geq 0$, we then see that%
\begin{equation*}
f(z)=\Sigma_{n=1}^{\infty }\left( X_{n}h_{n}(z)+Y_{n}g_{n}(z)\right) .
\end{equation*}
\end{proof}

\begin{theorem}
The class $\mathcal{E}S_{\overline{\mathcal{H}}}(k,\lambda ,\gamma)$ is closed under convex combinations.
\end{theorem}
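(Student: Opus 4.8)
The plan is to reduce the statement entirely to the coefficient characterization of Theorem~\ref{t2}. I would start with a sequence of functions $f_{i}\in \mathcal{E}S_{\overline{\mathcal{H}}}(k,\lambda ,\gamma )$, $i=1,2,\dots$, each written in the normalized form $(\ref{x})$ as
\[
f_{i}(z)=z+\Sigma_{n=2}^{\infty}(-1)^{n}\left\vert a_{n,i}\right\vert z^{n}+\overline{\Sigma_{n=1}^{\infty}(-1)^{n-1}\left\vert b_{n,i}\right\vert z^{n}},
\]
together with weights $t_{i}\geq 0$ satisfying $\Sigma_{i=1}^{\infty}t_{i}=1$, and then form the convex combination $F(z)=\Sigma_{i=1}^{\infty}t_{i}f_{i}(z)$.

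First I would verify that $F$ again has the structural form $(\ref{x})$. Since every $f_{i}$ carries the same fixed sign pattern, namely $(-1)^{n}$ on its analytic coefficients and $(-1)^{n-1}$ on its co-analytic coefficients, and since each $t_{i}$ is real, collecting terms gives
\[
F(z)=z+\Sigma_{n=2}^{\infty}(-1)^{n}\Big(\Sigma_{i=1}^{\infty}t_{i}\left\vert a_{n,i}\right\vert \Big)z^{n}+\overline{\Sigma_{n=1}^{\infty}(-1)^{n-1}\Big(\Sigma_{i=1}^{\infty}t_{i}\left\vert b_{n,i}\right\vert \Big)z^{n}}.
\]
Thus $F$ is of the form $(\ref{x})$ with nonnegative moduli $A_{n}=\Sigma_{i=1}^{\infty}t_{i}\left\vert a_{n,i}\right\vert$ and $B_{n}=\Sigma_{i=1}^{\infty}t_{i}\left\vert b_{n,i}\right\vert$, its leading coefficient $\Sigma_{i=1}^{\infty}t_{i}=1$ remaining intact.

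The heart of the argument is to check the inequality $(\ref{2`1})$ for $F$. Substituting $A_{n}$ and $B_{n}$ and interchanging the two nonnegative summations, the left-hand side of $(\ref{2`1})$ for $F$ equals
\[
\Sigma_{i=1}^{\infty}t_{i}\left[ \Sigma_{n=1}^{\infty}\binom{n+\lambda -1}{\lambda }\frac{n^{k}\big[(n-\gamma )\left\vert a_{n,i}\right\vert +(n+\gamma )\left\vert b_{n,i}\right\vert \big]}{(1-\gamma )(2n-1)(n-1)!}\right].
\]
By Theorem~\ref{t2} each inner bracket is at most $2$, because $f_{i}\in \mathcal{E}S_{\overline{\mathcal{H}}}(k,\lambda ,\gamma )$ satisfies $(\ref{2`1})$; hence the whole expression is bounded by $2\,\Sigma_{i=1}^{\infty}t_{i}=2$. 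The sufficiency direction of Theorem~\ref{t2} then yields $F\in \mathcal{E}S_{\overline{\mathcal{H}}}(k,\lambda ,\gamma )$, which is exactly the desired closure.

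The only point demanding any care is the interchange of summation, and even this is immediate since every summand is nonnegative, so Tonelli's theorem applies; I expect no genuine obstacle beyond bookkeeping. To sidestep convergence subtleties altogether, I would first record the finite case $F=\Sigma_{i=1}^{m}t_{i}f_{i}$, where the interchange is merely a finite rearrangement, and then pass to the countable case by monotone convergence.
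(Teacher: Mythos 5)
Your proposal is correct and follows essentially the same route as the paper: write the convex combination in the form $(\ref{x})$, interchange the (nonnegative) double sum, apply the coefficient bound of Theorem \ref{t2} to each $f_{i}$, and invoke the sufficiency direction to conclude. The only cosmetic differences are that you work with the $\leq 2$ version of $(\ref{2`1})$ rather than the normalized $\leq 1$ form used in the paper, and that you justify the summation interchange explicitly via Tonelli, which the paper leaves implicit.
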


\begin{proof}
Assume $f_{i}\in $ $\mathcal{E}S_{\overline{\mathcal{H}}}(k,\lambda ,\gamma )$, where
\begin{equation*}
f_{i}(z)=z+\Sigma_{n=2}^{\infty}(-1)^{n}\left\vert
a_{n_{i}}\right\vert z^{n}+\Sigma_{n=1}^{\infty}
(-1)^{n-1}\left\vert b_{n_{i}}\right\vert \overline{z}^{n} \ \ \ (i=1,2,...).
\end{equation*}%
Thus, by $(\ref{2`1})$,%
\begin{equation}
\Sigma_{n=2}^{\infty}\tbinom{n+\lambda -1}{\lambda }\tfrac{(n-\gamma )n^{k}}{(1-\gamma )(2n-1)(n-1)!}\left\vert a_{n_{i}}\right\vert +
\Sigma_{n=1}^{\infty}\tbinom{n+\lambda -1}{\lambda }\tfrac{
(n+\gamma )n^{k}}{(1-\gamma )(2n-1)(n-1)!}\left\vert b_{n_{i}}\right\vert
\leq 1.  \label{eqconcomb}
\end{equation}%
For $\Sigma_{i=1}^{\infty}t_{i}=1$ $\left( 0\leq
t_{i}\leq 1\right) $, the convex combination of $f_{i}$ can be expressed by%
\begin{equation*}
\Sigma_{i=1}^{\infty}t_{i}f_{i}(z)=z+\Sigma_{n=2}^{\infty}(-1)^{n}\left( \Sigma_{i=1}^{\infty}
t_{i}\left\vert a_{n_{i}}\right\vert \right) z^{n}+\Sigma_{n=1}^{\infty}(-1)^{n-1}\left( \Sigma_{i=1}^{\infty}t_{i}\left\vert b_{n_{i}}\right\vert \right) \overline{z}^{n}.
\end{equation*}
Finally, we deduce from (\ref{eqconcomb}),
\begin{eqnarray*}
&&\Sigma_{n=2}^{\infty}\tbinom{n+\lambda -1}{\lambda }%
\tfrac{(n-\gamma )n^{k}}{(1-\gamma )(2n-1)(n-1)!}\left( \Sigma_{i=1}^{\infty}t_{i}\left\vert a_{n_{i}}\right\vert \right) \\
&& \\
&&+\Sigma_{n=1}^{\infty}\tbinom{n+\lambda -1}{\lambda }%
\frac{(n+\gamma )n^{k}}{(1-\gamma )(2n-1)(n-1)!}\left( \Sigma_{i=1}^{\infty}t_{i}\left\vert b_{n_{i}}\right\vert \right) \\
&& \\
&\leq &\Sigma_{i=1}^{\infty}t_{i}\left( \Sigma_{n=2}^{\infty}\tbinom{n+\lambda -1}{\lambda }\tfrac{(n-\gamma )n^{k}%
}{(1-\gamma )(2n-1)(n-1)!}\left\vert a_{n_{i}}\right\vert \right. \\
&& \\
&&\left. +\Sigma_{n=1}^{\infty}\tbinom{n+\lambda -1}{%
\lambda }\tfrac{(n+\gamma )n^{k}}{(1-\gamma )(2n-1)(n-1)!}\left\vert
b_{n_{i}}\right\vert \right) \\
&& \\
&\leq &\Sigma_{i=1}^{\infty}t_{i}=1.
\end{eqnarray*}

This is the condition required by $(\ref{2`1})$ and so $\Sigma_{i=1}^{\infty}t_{i}f_{i}(z)\in $ $\mathcal{E}S_{\overline{%
\mathcal{H}}}(k,\lambda ,\gamma ).$
\end{proof}

Next we present the closure properties of $\mathcal{E}S_{%
\overline{\mathcal{H}}}(k,\lambda ,\gamma )$ under the generalized
Bernardi-Libera-Livingston integral operator $\mathfrak{F}_{c}(f)$
defined by%
\begin{equation}
\mathfrak{F}_{c}(f(z))=\frac{1+c}{z^{c}}\int\limits_{0}^{z}t^{c-1}f(t)dt\ \
\ \ \ \ (c\geq 0).  \label{O}
\end{equation}%
While the operator $\mathfrak{F}_{c}$, when $c\in \mathbb{N}$, was
defined by Bernardi \cite{ber}, $\mathfrak{F}_{1}$ was investigated earlier by Libera \cite{lib} and Livingston \cite{liv}.

\begin{theorem}
Let $f\in \mathcal{E}S_{\overline{\mathcal{H}}}(k,\lambda ,\gamma )$. Then $%
\mathfrak{F}_{c}(f)$ belongs to the family $\mathcal{E}S_{\overline{\mathcal{H%
}}}(k,\lambda ,\gamma )$.
\end{theorem}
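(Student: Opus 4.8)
The plan is to show that $\mathfrak{F}_{c}$ acts diagonally on the coefficients of $f$, contracting each of them by a factor lying in $[0,1]$, and then to invoke the coefficient characterization of Theorem~\ref{t2}. Since membership in $\mathcal{E}S_{\overline{\mathcal{H}}}(k,\lambda ,\gamma )$ is governed entirely by the admissibility sum $(\ref{2`1})$, it suffices to track how $\mathfrak{F}_{c}$ transforms the moduli $|a_{n}|,|b_{n}|$ and to check that the sum can only decrease.

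First I would make explicit the action of the operator on the analytic and co-analytic parts separately, adopting the natural convention $\mathfrak{F}_{c}(f)=\mathfrak{F}_{c}(h)+\overline{\mathfrak{F}_{c}(g)}$; here the integral in $(\ref{O})$ is applied to the analytic functions $h$ and $g$, since $\overline{g}$ is not analytic and a genuine contour integral of it would be path-dependent. A termwise integration in $(\ref{O})$ gives, for $h(z)=z+\Sigma_{n=2}^{\infty}(-1)^{n}|a_{n}|z^{n}$,
\[
\mathfrak{F}_{c}(h)(z)=z+\Sigma_{n=2}^{\infty}\frac{1+c}{n+c}(-1)^{n}|a_{n}|z^{n},
\]
and likewise $\mathfrak{F}_{c}(g)(z)=\Sigma_{n=1}^{\infty}\frac{1+c}{n+c}(-1)^{n-1}|b_{n}|z^{n}$. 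Writing $\mathfrak{F}_{c}(f)=H+\overline{G}$, the moduli of its coefficients are $\frac{1+c}{n+c}|a_{n}|$ and $\frac{1+c}{n+c}|b_{n}|$, while the linear term $z$ is unchanged. Because every factor $\frac{1+c}{n+c}$ is positive, the sign pattern of $(\ref{x})$ is untouched, so $\mathfrak{F}_{c}(f)$ is again of the form $(\ref{x})$.

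Next I would record the elementary inequality $0\leq \frac{1+c}{n+c}\leq 1$, valid for all $n\geq 1$ and $c\geq 0$ (it is equivalent to $1\leq n$); this is the heart of the matter, since it forces $\frac{1+c}{n+c}|a_{n}|\leq |a_{n}|$ and $\frac{1+c}{n+c}|b_{n}|\leq |b_{n}|$. Substituting these contracted moduli into the left-hand side of $(\ref{2`1})$ term by term, the admissibility sum for $\mathfrak{F}_{c}(f)$ is bounded above by the corresponding sum for $f$. Because $f\in \mathcal{E}S_{\overline{\mathcal{H}}}(k,\lambda ,\gamma )$ yields, via Theorem~\ref{t2}, that the sum for $f$ is at most $2$, the same bound holds for $\mathfrak{F}_{c}(f)$; Theorem~\ref{t2} then gives $\mathfrak{F}_{c}(f)\in \mathcal{E}S_{\overline{\mathcal{H}}}(k,\lambda ,\gamma )$.

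The only genuinely delicate point is the first one: pinning down what $\mathfrak{F}_{c}$ should mean on a harmonic, non-analytic function, i.e. justifying the splitting $\mathfrak{F}_{c}(f)=\mathfrak{F}_{c}(h)+\overline{\mathfrak{F}_{c}(g)}$. Once that convention is fixed, the remainder is routine: everything collapses onto the monotone contraction factor $\frac{1+c}{n+c}$ together with the coefficient test already established in Theorem~\ref{t2}.
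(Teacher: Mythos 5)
Your proposal is correct and follows essentially the same route as the paper: termwise integration shows $\mathfrak{F}_{c}$ multiplies the $n$-th coefficients by $\tfrac{1+c}{n+c}\leq 1$, and the coefficient characterization of Theorem~2 then transfers membership from $f$ to $\mathfrak{F}_{c}(f)$. Your explicit remarks on the convention $\mathfrak{F}_{c}(f)=\mathfrak{F}_{c}(h)+\overline{\mathfrak{F}_{c}(g)}$ and on preservation of the sign pattern in $(\ref{x})$ are points the paper glosses over, but they do not change the argument.
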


\begin{proof}
By making use of the representation of $\mathfrak{F}_{c}(f)$, we arrive
\begin{equation*}
\begin{array}{ll}
\mathfrak{F}_{c}(f(z)) & =\dfrac{1+c}{z^{c}}\int\limits_{0}^{z}t^{c-1}\left(
h(t)+\overline{g(t)}\right) dt\  \\ 
&  \\ 
& =\dfrac{1+c}{z^{c}}\int\limits_{0}^{z}t^{c-1}\left(
t+\Sigma_{n=2}^{\infty}(-1)^{n}\left\vert a_{n}\right\vert
t^{n}\right) +\overline{\int\limits_{0}^{z}t^{c-1}\left(
\Sigma_{n=1}^{\infty}(-1)^{n-1}\left\vert b_{n}\right\vert
t^{n}\right) }dt\  \\ 
&  \\ 
& =z+\Sigma_{n=2}^{\infty}\frac{c+1}{c+n}(-1)^{n}\left\vert
a_{n}\right\vert z^{n}+\Sigma_{n=1}^{\infty}\overline{\frac{c+1}{c+n}%
(-1)^{n-1}\left\vert b_{n}\right\vert z^{n}}.%
\end{array}
\end{equation*}
Hence,
\begin{equation*}
\begin{array}{l}
\Sigma_{n=2}^{\infty}\binom{n+\lambda -1}{\lambda }\left( \frac{%
n-\gamma }{1-\gamma }\frac{c+1}{c+n}\left\vert a_{n}\right\vert +\frac{%
n+\gamma }{1-\gamma }\frac{c+1}{c+n}\left\vert b_{n}\right\vert \right) 
\frac{1}{(2n-1)(n-1)!} \\ 
\\ 
\leq \Sigma_{n=2}^{\infty}\binom{n+\lambda -1}{\lambda }\left( \frac{
n-\gamma }{1-\gamma }\left\vert a_{n}\right\vert +\frac{n+\gamma }{1-\gamma }
\left\vert b_{n}\right\vert \right) \frac{1}{(2n-1)(n-1)!} \\ 
\\ 
\leq 1-\frac{1+\gamma }{1-\gamma }\left\vert b_{1}\right\vert .%
\end{array}%
\end{equation*}

Since $f\in \mathcal{E}S_{\overline{\mathcal{H}}}(k,\lambda ,\gamma )$,
therefore by Theorem 2, $\mathfrak{F}_{c}(f)\in \mathcal{E}S_{\overline{%
\mathcal{H}}}(k,\lambda ,\gamma )$.
\end{proof}

\end{document}